\def\ds{\displaystyle}
\def\R{\mathbb{R}}
\def\e{\varepsilon}
\def\s{\sigma}
\def\d{\delta}
\def\l{\lambda}
\def\S{\mathcal{S}}
\def\la{\langle}
\def\ra{\rangle}
\def\c{\mathfrak{c}}
\def\cR{\mathcal{R}}
\def\ch2{\mathbb{C} \mathbb{H}^2}
\def\h2{\mathbb{H}^2}
\def\ddt{\frac{\partial}{\partial \theta}}
\def\ddr{\frac{\partial}{\partial r}}
\def\th{\theta}
\def\oshr/2{\cosh \left( \frac{r}{2} \right)}
\def\inhr/2{\sinh \left( \frac{r}{2} \right)}
\def\osh2r/2{\cosh^2 \left( \frac{r}{2} \right)}
\def\inh2r/2{\sinh^2 \left( \frac{r}{2} \right)}
\def\-1/4{- \frac{1}{4}}
\def\H{\mathbb{H}}
\def\C{\mathbb{C}}
\def\S{\mathbb{S}}
\def\P{\mathbb{P}}
\def\L{\Lambda}
\newtheorem{theorem}{Theorem}[section]
\newtheorem{lemma}[theorem]{Lemma}
\theoremstyle{definition}
\theoremstyle{remark}
\newtheorem{remark}[theorem]{Remark}
\numberwithin{equation}{section}
\begin{document}

\title{Curvature Operators on K\"{a}hler Manifolds}

\author{Barry Minemyer}
\address{Department of Mathematics, Computer Science, and Digital Forensics, Commonwealth University - Bloomsburg, Bloomsburg, Pennsylvania 17815}
\email{bminemyer@commonwealthu.edu}


\subjclass[2020]{Primary 53C24, 53C35, 53C55; Secondary 53C15, 53C20, 53C21}

\date{\today.}



\begin{abstract}
We prove that there exist K\"{a}hler manifolds that are not homotopy equivalent to a quotient of complex hyperbolic space but which admit a Riemannian metric with nonpositive curvature operator.  
This shows that K\"{a}hler manifolds do not satisfy the same type of rigidity with respect to the curvature operator as quaternionic hyperbolic and Cayley hyperbolic manifolds and are thus more similar to real hyperbolic manifolds in this setting.  
Along the way we also calculate explicit values for the eigenvalues of the curvature operator with respect to the standard complex hyperbolic metric.  
\end{abstract}

\maketitle



\section{Introduction}\label{Section:Introduction}

Let $(M, g)$ be a Riemannian manifold, $p \in M$, and let $\L^2(T_p M)$ denote the space of alternating 2-forms on $T_p M$, the tangent space to $M$ at $p$.  
The Riemannian metric $g$ gives rise an inner product $\left\la \, , \right\ra$ on $\L^2(T_p M)$ as follows.  
Given $U, V, W, X \in T_p M$ define 
	\begin{equation}\label{eqn:curv op inner product}
	\left\la U \wedge V, W \wedge X \right\ra = g(U,W) g(V,X) - g(U,X) g(V,W) 
	\end{equation}
and extend linearly to all of $\L^2(T_p M)$.  
The curvature operator $\cR$ on $M$ with respect to $g$ is the unique endomorphism on $\L^2 (T_p M)$ defined by 
	\begin{equation*}
	\left\la \cR(U \wedge V), W \wedge X \right\ra = g (R(U,V)W, X) 
	\end{equation*}
where $R$ is the sectional curvature tensor corresponding to the metric $g$.  

Being symmetric, the curvature operator $\cR$ has all real eigenvalues.  
The sign of $\cR$ is then defined to be the sign of its eigenvalues.  
In this paper we will be concerned with nonpositive curvature operators.  
But there is also a large amount of literature concerning negative, positive, and nonnegative curvature operators.  
For more information one should consult the excellent survey article by C.S. Aravinda \cite{Aravinda}, where much of the content of this Introduction was obtained as well.  

One easily checks that the curvature operator is stronger than that of sectional curvature.  
Indeed, if a Riemannian metric has nonpositive (respectively negative, positive, nonnegative) curvature operator at every point in the manifold, then it is an easy exercise to see that all sectional curvatures are nonpositive (respectively negative, positive, nonnegative).  
The reverse implication is generally false, as we will see below.  

In the quaternionic hyperbolic and Cayley hyperbolic setting, manifolds which admit metrics with nonpositive curvature operator are extremely rigid.  
A result of Corlette \cite{Corlette} says the following.  
Suppose $M$ is a compact quaternionic hyperbolic (with quaternionic dimension at least 2) or Cayley hyperbolic manifold, and supposed that $N$ is a closed Riemannian manifold with nonpositive curvature operator whose fundamental group is isomorphic to the fundamental group of $M$.  
Then $M$ and $N$ are isometric (up to scaling the metric).  
From this result one sees that the negatively curved exotic manifolds constructed by Aravinda and Farrell in \cite{AF1} and \cite{AF2} do not admit Riemannian metrics with nonpositive curvature operator (see also \cite{AF}).  

In contrast, there is a large amount of flexibility in the real hyperbolic setting.  
If all sectional curvatures are pinched arbitrarily close to a given nonzero number, then the curvature operator must have the same sign as this number.  
So, in particular, the pinched metrics constructed by Gromov and Thurston in \cite{GT} and, more generally, the metrics produced by Ontaneda's smooth hyperbolization \cite{Ontaneda} all have negative curvature operator but are not diffeomorphic to a hyperbolic manifold.  
Additionally, an inspection of the metric constructed in \cite{GT} shows that all eigenvalues of the curvature operator are also pinched near $-1$.  

In the present paper we address the complex hyperbolic setting.  
No rigidity results mirroring Corlette's result are known, but there are also no examples of the ``flexibility" demonstrated by the Gromov and Thurston manifolds.  
This is likely due to the scarcity of examples of negatively curved K\"{a}hler manifolds that are not diffeomorphic to quotients of complex hyperbolic space $\C \H^n$ and, within these few examples, the difficulty in computing the eigenvalues of their curvature operator.  
Prior to a recent breakthrough of Stover and Toledo \cite{ST}, the only examples of such manifolds that the author is aware of which admit a nonpositively curved K\"{a}hler metric are due to Mostow and Siu \cite{MS}, Deraux \cite{Deraux}, Hirzebruch \cite{Hirzebruch} (whose K\"{a}hler metric was constructed by Zheng \cite{Zheng}), Zheng \cite{Zheng2}, and the exotic examples of Farrell and Jones \cite{FJ}.  
Our main result states that the manifolds constructed in \cite{ST} admit Riemannian metrics with nonpositive curvature operator.


\begin{theorem}\label{thm:main theorem}
For each complex dimension $n$ there exist K\"{a}hler manifolds $M$ of dimension $n$ which admit a Riemannian metric with nonpositive curvature operator and are not homotopy equivalent to a quotient of $\C \H^n$.  
\end{theorem}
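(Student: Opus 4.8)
The plan is to take the Stover–Toledo manifolds from \cite{ST}, which are closed complex-hyperbolic–like Kähler manifolds that are provably not homotopy equivalent to quotients of $\C\H^n$, and to exhibit on each of them a Riemannian metric whose curvature operator is nonpositive. Let me think about how to structure this. The Stover–Toledo examples are (I believe) built as certain branched covers or ball-quotient-type constructions that carry a nonpositively curved Kähler metric but are topologically distinct from complex-hyperbolic manifolds — so the topological half of the statement (``not homotopy equivalent to a quotient of $\C\H^n$'') should be quotable directly from their work once I verify their examples are not homotopy equivalent to the standard ones, presumably via characteristic-number or fundamental-group arguments already in \cite{ST}. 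Thus the real content I must supply is the \emph{geometric} half: upgrading ``nonpositive sectional curvature'' to ``nonpositive curvature operator.''

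First I would fix the model computation, which the excerpt promises is carried out ``along the way'': I would compute explicitly the eigenvalues of the curvature operator $\cR$ of the standard complex hyperbolic metric $\gch$ on $\C\H^n$. Using the constant-holomorphic-sectional-curvature formula for the curvature tensor $R$ of $\gch$, I would diagonalize $\cR$ on $\L^2(T_p\C\H^n)$ by decomposing this $\binom{2n}{2}$-dimensional space into eigenspaces adapted to the complex structure $J$ — for instance using the Kähler form $\omega$ and its $J$-invariant/$J$-anti-invariant complements. The expectation is to find a small number of distinct eigenvalues, all strictly negative, so that $\cR$ for complex hyperbolic space is negative definite with a definite spectral gap. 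This explicit spectrum is the quantitative target that the metric on $M$ must approximate.

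Next I would build the metric on the Stover–Toledo manifold $M$ by interpolation. These manifolds are complex hyperbolic away from a subvariety (the branch locus / singular locus of the construction), and the metric degenerates or is modified only in a neighborhood of that locus. My plan is to start from a metric that is genuinely the complex hyperbolic metric outside a tubular neighborhood of the locus, and inside the neighborhood to patch in a model Kähler metric chosen so that its curvature operator is nonpositive there. The key point is that near the locus the metric can be written as a warped-product-type expression in the normal coordinate $r$, and I would use the negativity and spectral gap from the model computation: since the curvature operator is an algebraic, pointwise function of the curvature tensor and the metric, and since $\cR$ for $\gch$ is strictly negative with a gap, any metric that is $C^2$-close to complex hyperbolic will still have nonpositive (indeed negative) curvature operator. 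On the transition annulus I would arrange the interpolation so that the $C^2$-distance to $\gch$ stays within that gap, which forces $\cR \le 0$ throughout. This is the same philosophy as the pinching argument quoted in the Introduction for the real hyperbolic Gromov–Thurston metrics, now applied pointwise to the operator rather than to sectional curvatures.

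The hard part will be controlling the curvature operator precisely \emph{in the interpolation region}, where the metric is neither complex hyperbolic nor the clean model and where cross terms in $R$ between the ``radial'' directions and the branch-locus directions can spoil negativity. Unlike the sectional-curvature estimate, nonpositivity of $\cR$ is a condition on the full quadratic form on $\L^2(T_pM)$, so I cannot test it on decomposable $2$-vectors alone — off-diagonal blocks of $\cR$ coupling different eigenspaces of the model can push an eigenvalue above zero even when every sectional curvature stays nonpositive. To handle this I would keep the warping functions' derivatives small enough that $\cR$ remains a small perturbation of the (gapped, negative) model operator, using the spectral gap as a quantitative buffer and, if necessary, choosing the branch data (degree, cone angles) so that the model metric in the fiber direction itself has nonpositive operator. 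Verifying that these estimates close up simultaneously with the topological nontriviality inherited from \cite{ST} — i.e., that the metric modification does not force $M$ to become homotopy equivalent to a $\C\H^n$-quotient after all — is where the two halves of the argument must be reconciled.
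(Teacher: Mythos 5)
Your proposal founders on its central quantitative premise: the curvature operator of the standard complex hyperbolic metric is \emph{not} negative definite and has no spectral gap below zero. As computed in Section \ref{section3: eigenvalues} (Theorem \ref{thm:ch eigenvalues}), $\cR$ for $\c_n$ has eigenvalue $0$ with multiplicity $n^2 - n$, alongside $-2$ and $-(2n+2)$. Consequently the claim ``any metric $C^2$-close to $\c_n$ still has nonpositive (indeed negative) curvature operator'' is false: a perturbation in the direction of the large kernel can immediately create positive eigenvalues while keeping all sectional curvatures pinched near $[-4,-1]$. This is not a hypothetical failure mode --- it is exactly the content of Theorem \ref{thm:main cor}, which produces such a metric by pushing the structure constants $c_i$ slightly above $2$. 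So the spectral gap you planned to use as a ``quantitative buffer'' on the transition annulus does not exist, and the perturbative strategy collapses at its foundation. Two smaller inaccuracies: the metric of Theorem \ref{thm:main theorem} is explicitly not K\"{a}hler, so one cannot ``patch in a model K\"{a}hler metric''; and the interpolation region in \cite{Min CH} is a long tube over which the metric is deformed substantially --- unwinding the horizontal distribution and increasing the cone angle in the manner of Gromov--Thurston --- not a small $C^2$ perturbation of $\c_n$.

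What the paper does instead is forced by this degeneracy: it writes $\cR$ in an explicit block-diagonal form adapted to holomorphic pairs, with entries depending on the structure constants $c_i \in [0,2]$ that parametrize the unwinding, and verifies nonpositivity block by block (Theorem \ref{thm: structure constant operator}). The $n \times n$ holomorphic block is shown to be negative definite by induction on principal minors, and --- crucially --- in the $2 \times 2$ blocks that carry the zero eigenvalue at $c_i = 2$, the paper must substitute the \emph{exact} curvature components from Theorem \ref{thm:new curvature equations} rather than their large-$r$ approximations, precisely because an approximation error of any size could tip a zero eigenvalue positive. Your instinct that off-diagonal coupling between eigenspaces is the danger is correct, but the resolution has to be exact algebra in the degenerate directions, not a perturbation estimate against a gap that is not there.
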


The Riemannian metric from Theorem \ref{thm:main theorem} is constructed in \cite{Min CH}, where it is also shown that the metric can be constructed so that all sectional curvatures lie in $[-4-\e, -1+\e]$ for any prescribed $\e > 0$.  
In the present article we prove that the curvature operator of this metric is nonpositive.  
Let us quickly note that this Riemannian metric is not K\"{a}hler.  The fact that these manifolds admit a (negatively curved) K\"{a}hler metric is due to Zheng \cite{Zheng}, and Stover and Toledo \cite{ST} proved that these manifolds are not homotopy equivalent to a quotient of $\C \H^n$.  

Theorem \ref{thm:main theorem} shows that, with respect to the curvature operator, the K\"{a}hler setting is more like the real hyperbolic setting than the quaternionic or Cayley hyperbolic situations.
It should be noted that this is at least a little surprising.  
Mostow rigidity \cite{Mostow} applies to all of these situations of course (for appropriate dimensions).  
But results of Hernandez \cite{Hernandez}, Yau and Zheng \cite{YZ}, and Deraux and Seshadri \cite{DS} show that, with respect to sectional curvature, negatively curved K\"{a}hler manifolds are overall much more rigid than general negatively curved Riemannian manifolds.  
But, unlike the quaternionic and Cayley hyperbolic situation, there is at least some flexibility with the curvature operator in the K\"{a}hler setting.  

Our work in this paper also gives an easy proof of the following.

\begin{theorem}\label{thm:main cor}
The manifolds from Theorem \ref{thm:main theorem} also admit a Riemannian metric with negative sectional curvature, but whose curvature operator is not nonpositive.  
\end{theorem}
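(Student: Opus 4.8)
The plan is to obtain the required metric by a small, compactly supported perturbation of the metric $g$ from Theorem \ref{thm:main theorem}, exploiting the fact that the complex hyperbolic curvature operator is only \emph{borderline} nonpositive. The key structural input is that for any K\"{a}hler metric the anti-invariant $2$-forms $\eta = X \wedge Y - JX \wedge JY$ lie in the kernel of $\cR$: the K\"{a}hler symmetry $g(R(JX,JY)Z,W) = g(R(X,Y)Z,W)$ gives $\la \cR(\eta), \eta \ra = 0$ immediately. This is exactly why $0$ appears among the eigenvalues in the explicit computation for the standard complex hyperbolic metric carried out elsewhere in this paper. Because the construction in \cite{Min CH} alters the complex hyperbolic metric only in a neighborhood of the exceptional locus, $g$ is genuinely complex hyperbolic on an open set $U \subseteq M$; hence on $U$ the operator $\cR_g$ is nonpositive with top eigenvalue exactly $0$, attained along such an $\eta$. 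Fixing an adapted orthonormal frame with $e_2 = J e_1$ and $e_4 = J e_3$, we may take $\eta = e_1 \wedge e_4 + e_2 \wedge e_3$, which is \emph{non-decomposable}.

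First I would fix a point $p \in U$ and consider the algebraic curvature tensor $S = \a \otimes \a$ with $\a = e_1 \wedge e_4$, together with its associated operator $\cR_S$ on $\L^2(T_p M)$. This is an honest curvature tensor — its first Bianchi identity reduces to $\a \wedge \a = 0$, which holds since $\a$ is decomposable — it is bounded, and $\la \cR_S(\eta), \eta \ra = \la \eta, \a \ra^2 = 1 > 0$. I would then choose a symmetric $2$-tensor $h$ supported in a small ball $B \subseteq U$ about $p$ whose first-order effect on the curvature tensor at $p$ is precisely $S$; this is possible because the linearization of curvature in the second derivatives of the metric surjects onto the space of algebraic curvature tensors. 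Put $g_t = g + t h$.

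For small $t > 0$ both conclusions hold. Since $\eta$ is a $0$-eigenvector of $\cR_g$ and $\cR_{g_t} = \cR_g + t \cR_S + O(t^2)$ at $p$, we get $\la \cR_{g_t}(\eta), \eta \ra = t \la \cR_S(\eta), \eta \ra + O(t^2) > 0$, so the largest eigenvalue of $\cR_{g_t}$ is positive and $\cR_{g_t}$ fails to be nonpositive. On the other hand $g_t$ agrees with $g$ outside $B$, and inside $B$ each sectional curvature differs from that of $g$ by $O(t)$; since the sectional curvatures of $g$ are bounded above by $-1 + \e < 0$, all sectional curvatures of $g_t$ remain negative once $t$ is small enough. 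Thus $g_t$ is the desired metric.

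The main obstacle is decoupling the two requirements: the perturbation must lift the $0$-eigenvalue to a strictly positive value \emph{without} driving any sectional curvature up to $0$. This is exactly where it matters that the kernel direction $\eta$ is non-decomposable, so that $\la \cR(\eta), \eta \ra$ is genuine curvature-operator data invisible to sectional curvature, and where the definite negative buffer $-1 + \e$ in the sectional curvatures of $g$ is used to absorb the $O(t)$ error. The point demanding the most care is the realization step — producing an honest, compactly supported $h$ whose first-order curvature change at $p$ is the prescribed $S$ — for which one invokes the surjectivity of the curvature linearization on second jets of metrics.
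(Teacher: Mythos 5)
Your argument is correct, but it takes a genuinely different route from the paper. The paper does not perturb the finished metric at a point; instead it re-runs its own warped-product construction with the structure constants pushed to $c_i = 2 + \d$ on an intermediate annular region (turning the horizontal distribution slightly \emph{away} from the $r$-tube before unwinding it), and then simply reads off a positive eigenvalue from the explicit $2\times 2$ blocks \eqref{eqn:block 1} and \eqref{eqn:block 2} -- e.g.\ the eigenvalue $-1+\tfrac14 c_1^2$ becomes positive once $c_1>2$ -- while Lemma \ref{thm:pinched curvature}, extended slightly beyond $c_i \in [-2,2]$ by continuity, keeps the sectional curvature pinched near $[-4,-1]$. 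You instead exploit the K\"ahler kernel direction $\eta = X\wedge Y - JX\wedge JY$ of $\cR$ on the unmodified complex hyperbolic region (this is indeed why $0$ occurs in Theorem \ref{thm:ch eigenvalues}, and your $\eta = e_1\wedge e_4 + e_2\wedge e_3$ matches the $0$-eigenvector $Y_1\wedge Y_4 + Y_2\wedge Y_3$ of the paper's block \eqref{eqn:block 1} at $c_1=2$), and you lift that eigenvalue by a compactly supported second-order perturbation realizing the rank-one algebraic curvature tensor $\a\otimes\a$. The realization step you flag is genuinely fine and even explicit: with $h_{jl} = \tfrac13 S_{ijkl}x^i x^k$ in normal coordinates (cut off near $p$), the linearized curvature at $p$ is exactly $S$, using only the first Bianchi identity of $S$, which holds for $\a\otimes\a$ since $\a$ is decomposable. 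What each approach buys: the paper's version requires no input beyond the matrices already computed in Section \ref{section3: eigenvalues} and yields a metric still pinched near $[-4,-1]$; yours is softer and more general, showing that \emph{any} negatively curved metric whose curvature operator has a zero eigenvalue at some point (in particular any metric that is locally K\"ahler-symmetric somewhere) can be perturbed to destroy nonpositivity of $\cR$ while preserving $K<0$. It is worth noting that the paper's Introduction sketches exactly your intuition (``perturb the metric in the direction of a $0$-eigenspace''), but Section \ref{section4: proof of main theorem} implements it through the structure constants rather than through a pointwise curvature realization. Two cosmetic points: your open set $U$ lies in the branched cover $X$, not in $M$, and the argument needs $n\ge 2$ so that a second holomorphic pair $e_3, e_4=Je_3$ exists.
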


There is nothing surprising about Theorem \ref{thm:main cor}.  
The Riemannian metric constructed in Theorem \ref{thm:main theorem} is equal to the standard complex hyperbolic metric $\c_n$ on parts of the manifold.  
The metric $\c_n$ has (many) eigenvalues of 0 (see below), and so intuitively one should be able to slightly perturb the metric in the direction of a 0-eigenspace in order to create a positive eigenvalue for the curvature operator.  
Since all sectional curvatures of $\C \H^n$ lie in the interval $[-4,-1]$, a sufficiently small perturbation should maintain negative sectional curvature.  
We quickly give a precise description of how this can be done in Section 4.  

Lastly, the calculations in Section 3 give an easy description of the eigenvalues of the curvature operator for the standard complex hyperbolic metric.  
One would expect that these are already known and, in any case, can be calculated using the equation directly preceeding Proposition IX.7.2 of \cite{KN} and the curvature formulas in \cite{Min warped}.  
But the author is unaware of anywhere in the literature where these are explicitly written down.  
So we record these eigenvalues here.

\begin{theorem}\label{thm:ch eigenvalues}
The curvature operator $\cR$ corresponding to the standard complex hyperbolic metric $\c_n$ has eigenvalues
	\begin{itemize}
	\item  $0$ with multiplicity $n^2 - n$.  
	\item  $-2$ with multiplicity $n^2 - 1$.  
	\item  $-(2n+2)$ with multiplicity 1.  
	\end{itemize}
Moreover, the eigenspace corresponding to the last eigenvalue of $-(2n+2)$ is equal to the span of the vector
	$$ \sum Y_i \wedge Y_{i'} $$
where $(i, i')$ form a holomorphic pair, and this sum ranges over all holomorphic pairs in an appropriately chosen basis (equation \eqref{eqn:ON basis} below).  
\end{theorem}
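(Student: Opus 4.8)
The plan is to diagonalize $\cR$ by exploiting the decomposition of $\L^2(T_p \C \H^n)$ induced by the complex structure $J$. First I would fix the adapted orthonormal basis of equation \eqref{eqn:ON basis}, writing it as $\{Y_1, Y_{1'}, \dots, Y_n, Y_{n'}\}$ with $Y_{i'} = JY_i$, so that each index pair $(i,i')$ is a holomorphic pair. The wedge products $Y_a \wedge Y_b$ with $a < b$ then form an orthonormal basis of $\L^2(T_p \C \H^n)$ for the inner product \eqref{eqn:curv op inner product}, and by definition the matrix entries of $\cR$ in this basis are exactly the curvature components $\langle \cR(Y_a \wedge Y_b), Y_c \wedge Y_d \rangle = R(Y_a, Y_b, Y_c, Y_d)$, which are available from the computations of Section 3 (equivalently, from the standard formula for a space of constant holomorphic sectional curvature $-4$).

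Next I would split $\L^2$ into three $\cR$-invariant subspaces according to how $J$ acts on $2$-forms via $(\tilde J \alpha)(X,Y) = \alpha(JX, JY)$: the line spanned by the K\"ahler form $\omega = \sum_i Y_i \wedge Y_{i'}$; the primitive $(1,1)$-part $P$, consisting of the $\tilde J$-invariant forms orthogonal to $\omega$; and the $(2,0)+(0,2)$-part $A$, consisting of the $\tilde J$-anti-invariant forms. Explicit spanning sets are easy to write down: $P$ is spanned by the $n-1$ differences $Y_i \wedge Y_{i'} - Y_j \wedge Y_{j'}$ together with the off-diagonal forms $Y_i \wedge Y_j + Y_{i'} \wedge Y_{j'}$ and $Y_i \wedge Y_{j'} - Y_{i'} \wedge Y_j$ for $i<j$, while $A$ is spanned by $Y_i \wedge Y_j - Y_{i'} \wedge Y_{j'}$ and $Y_i \wedge Y_{j'} + Y_{i'} \wedge Y_j$ for $i<j$. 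A dimension count gives $\dim \langle \omega \rangle = 1$, $\dim P = n^2 - 1$, and $\dim A = n^2 - n$, which already matches the claimed multiplicities.

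The heart of the proof is then to check that each of these spanning vectors is an eigenvector of $\cR$ with the asserted eigenvalue. I would verify directly from the curvature components that $\cR \omega = -(2n+2)\omega$, that $\cR$ acts as the scalar $-2$ on each listed generator of $P$, and that $\cR$ annihilates each listed generator of $A$. Since $\cR$ is symmetric and these three subspaces are mutually orthogonal and span $\L^2$, exhibiting this eigenbasis determines the entire spectrum and simultaneously shows that the top eigenspace is exactly $\langle \omega \rangle = \left\langle \sum_i Y_i \wedge Y_{i'} \right\rangle$, which is the final assertion.

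The main obstacle is bookkeeping rather than conceptual: for each candidate eigenvector one must evaluate $\langle \cR(\cdot), Y_c \wedge Y_d \rangle = R(\cdot, \cdot, Y_c, Y_d)$ against every basis $2$-form and confirm that the off-eigenvector components genuinely cancel. The cancellations in the $(2,0)+(0,2)$ case---where $R(Y_i, Y_j, Y_c, Y_d)$ and $R(Y_{i'}, Y_{j'}, Y_c, Y_d)$ must agree identically so that their difference vanishes---are the most delicate, and care with the sign convention relating $R$ to $\cR$ is needed to land on negative (rather than positive) eigenvalues, consistent with the sectional curvatures of $\c_n$ lying in $[-4,-1]$.
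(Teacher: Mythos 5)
Your proposal is correct, but it takes a genuinely different route from the paper. The paper never invokes the complex structure on $\Lambda^2$; instead it writes $\c_n$ in polar coordinates about a copy of $\C\H^{n-1}$, computes the curvature components of the more general warped-product metric $\mu$ as functions of the structure constants $c_i$, and orders the basis $Y_a\wedge Y_b$ so that $[\cR]$ becomes block diagonal: one $n\times n$ ``holomorphic block'' on the span of the $n$ bivectors $Y_i\wedge Y_{i'}$, plus $n^2-n$ blocks of size $2\times 2$. At $c_i=2$ each $2\times 2$ block contributes eigenvalues $0$ and $-2$, while the holomorphic block (with $-4$ on the diagonal and $-2$ off it) contributes $-2$ with multiplicity $n-1$ and $-(2n+2)$ on the all-ones vector. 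Your decomposition $\Lambda^2=\la\omega\ra\oplus P\oplus A$ produces exactly the same eigenbasis---for instance your generators $Y_i\wedge Y_j-Y_{i'}\wedge Y_{j'}$ of $A$ are precisely the $0$-eigenvectors of the paper's $2\times2$ blocks---but it is organized by the action of $J$ rather than by the polar-coordinate structure. What your approach buys: conceptual clarity, agreement with the classical Bourguignon--Karcher treatment of $\C\P^n$, and one step for free, since the vanishing of $\cR$ on the $(2,0)+(0,2)$ part follows for \emph{any} K\"ahler metric from the symmetry $R(X,Y,Z,W)=R(X,Y,JZ,JW)$; the cancellation you single out as most delicate is therefore automatic. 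What the paper's approach buys: it is carried out for all structure constants $c_i\in[0,2]$, where $\mu$ is no longer K\"ahler and your $J$-invariant splitting is no longer $\cR$-invariant; that generality is exactly what Theorem \ref{thm:main theorem} requires, and Theorem \ref{thm:ch eigenvalues} then falls out as the special case $c_i=2$ of Theorem \ref{thm: structure constant operator}. One point to make explicit in your write-up is the sign convention: with the paper's definitions one has $\la\cR(Y_a\wedge Y_b),Y_a\wedge Y_b\ra$ equal to the sectional curvature of the coordinate $2$-plane, which resolves the sign ambiguity you flag at the end.
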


\begin{remark}
Note that, if $M$ has complex dimension $n$, then the real dimension of $\L^2(T_p M)$ is
	\begin{equation*}
	{2n \choose 2} = \frac{2n(2n-1)}{2} = 2n^2 - n.
	\end{equation*}
Therefore, the multiplicities stated in Theorem \ref{thm:ch eigenvalues} add up correctly.  
\end{remark}

The eigenvalues for the curvature operator of the complex projective space $\C \P^n$ can be found in \cite{BK} (Section 5.2).  
Note that the eigenvalues of the complex hyperbolic and complex projective metrics are not exactly negatives of each other, but they are very similar.  
The author was surprised to see that one of the eigenvalues of $\cR$ with respect to $\c_n$ depends on $n$.  
But this makes at least some sense intuitively.  
As $n$ increases, $\cR$ gains more eigenvalues of 0.  
But all sectional curvatures of $\c_n$ remain in $[-4,-1]$.  
Thus, there must be either a deccreasing eigenvalue or an increasing multiplicity (or both) to offset the extra nonnegative eigenvalues.  

This paper is laid out as follows.  
In Section \ref{section2: polar coordinates} we review the metric $\c_n$ written in polar coordinates about a complex codimension 1 submanifold.  
The results from this Section can almost all be found in \cite{Bel complex}, \cite{Min warped}, and \cite{Min CH}.  
Section \ref{section3: eigenvalues} is the meat of the paper.  
Here we directly calculate or approximate the eigenvalues of $\c_n$ but as functions of the structure constants $c_i$ defined via equation \eqref{eqn:structure constants 2} below.  
When $c_i = 2$ for all $i$ we recover $\c_n$, and so the calculations in Section \ref{section3: eigenvalues} prove Theorem \ref{thm:ch eigenvalues}.  
In Section \ref{section4: proof of main theorem} we review the construction of the metric from \cite{Min CH} and prove that it has nonpositive curvature operator.  
We also indicate how to vary the metric to satisfy Theorem \ref{thm:main cor}.

\subsection*{Acknowledgments}
The author is grateful to C.S. Aravinda for mentioning this problem via email the day after \cite{Min CH} was posted to arXiv, for forwarding reference \cite{Aravinda}, and for helpful comments on the first draft of this paper.

\vskip 20pt

\section{Curvature formulas for the complex hyperbolic metric written in polar coordinates about a complex hyperplane}\label{section2: polar coordinates}
In this Section we review curvature formulas for $\C \H^n$ written in polar coordinates about a copy of $\C \H^{n-1}$, and we give a slightly new version of the curvature formulas for the corresponding warped product metric.  
Curvature formulas for the pair $(\C \H^n, \C \H^{n-1})$ were derived in \cite{Bel complex} for curvatures in $[ -1, - 1/4]$, and converted to curvatures in $[-4,-1]$ in \cite{Min warped} and \cite{Min CH} in slightly different settings.  
All besides one of the curvature formulas in Theorem \ref{thm:new curvature equations} below can be found in the previous references and, in particular, this Section is very similar to Section 2 of \cite{Min CH}.

\subsection{The metric on $\C \H^n$ in polar coordinates about $\C \H^{n-1}$}\label{subsection:chn/chn-1}
Let $\C \H^{n-1}$ denote a complex codimension one totally geodesic complex submanifold in $\C \H^n$, let $r$ denote the distance to $\C \H^{n-1}$ within $\C \H^n$, and let $\c_n$ and $\c_{n-1}$ denote the metrics on $\C \H^n$ and $\C \H^{n-1}$ normalized to have constant holomorphic curvature $-4$.  
Let $\phi : \C \H^n \to \C \H^{n-1}$ denote the orthogonal projection onto $\C \H^{n-1}$, and let $E(r)$ denote the $r$-tube about $\C \H^{n-1}$.   
Since $\C \H^{n-1}$ is contractible, topologically one has that  $E(r) \cong \R^{2n-2} \times \S^1$ and $\C \H^n \setminus \C \H^{n-1} \cong \R^{2n-2} \times \S^1 \times (0, \infty)$.  
The metric in $\C \H^n$ in polar coordinates about $\C \H^{n-1}$ is given by
	\begin{equation}\label{eqn:c metric}
	\c_n = \cosh^2(r) \c_{n-1} + \frac{1}{4} \sinh^2(2r) d \th^2 + dr^2
	\end{equation}
where $d \th$ denotes the standard metric on $\S^1$ and $\c_{n-1}$ is the complex hyperbolic metric on the orthogonal complement to $\text{span}(d/d \th , d/dr)$.  
As described below, this subspace will not be tangent to $E(r)$.
In this paper we will frequently refer to this subspace as the {\it horizontal distribution}, while we call $\text{span}(d/d \th , d/dr)$ the {\it vertical distribution}.  

Let $p \in \C \H^{n-1}$.  
We define a special basis $(\check{X}_1, \check{X}_2, \hdots, \check{X}_{2n-2})$ of $T_p \C \H^{n-1}$, which we call a {\it holomorphic basis} near $p$, as follows.  
We first define $\check{X}_1$ to be any unit vector in $T_p \C \H^{n-1}$.  
We then define $\check{X}_2 = J \check{X}_1$, where $J$ denotes the complex structure on $\C \H^n$.  
Since $\C \H^{n-1}$ is a complex submanifold, $J \check{X}_1 \in T_p \C \H^{n-1}$.  
We call such a pair $\{ \check{X}, J \check{X} \}$ a {\it holomorphic pair}.
Let $\check{X}_3$ be any unit vector in $T_p \C \H^{n-1}$ which is orthogonal to $\text{span}(\check{X}_1, \check{X}_2)$, and let $\check{X}_4 = J \check{X}_3$.  
It is easily seen that $J \check{X}_3$ is orthogonal to the span of $\{ \check{X_1}, \check{X}_2, \check{X}_3 \}$.  
We continue in this way to construct an orthonormal basis $(\check{X}_1, \check{X}_2, \hdots, \check{X}_{2n-2})$ of $T_p \C \H^{n-1}$ which satisfies that, for $i$ odd, the pair $(\check{X}_i, \check{X}_{i+1})$ is a holomorphic pair.
Via a standard construction, we can extend this basis to a neighborhood of $p$ in $\C \H^{n-1}$ in such a way that $[ \check{X}_i, \check{X}_j ]_p = 0$ for all $i$ and $j$.  
Let us note that if $i$ is an odd integer then $\text{exp}_p(\text{span}(\check{X}_i, \check{X}_{i+1}))$ is a complex line and thus the sectional curvature with respect to $\c_n$ of this 2-plane is $-4$, whereas if $\{ \check{X}_i, \check{X}_j \}$ do not form a holomorphic pair then $\text{exp}_p(\text{span}(\check{X}_i, \check{X}_j))$ is a totally real totally geodesic subspace of $\C \H^{n-1}$ and thus has curvature $-1$ with respect to $\c_n$.   

Let $q \in \C \H^n$ be such that $\phi(q) = p$.
Extend the collection $( \check{X}_i )_{i=1}^{2n-2}$ to vector fields $X_1, X_2, \hdots, X_{2n-2}$ defined near $q$ in $\C \H^n$ via $d \phi^{-1}$ in such a way that these vector fields are invariant with respect to both $\theta$ and $r$.  
We will call such a frame a {\it holomorphic frame} near $q$.  
We need to understand the Lie brackets of this frame associated to the metric in \eqref{eqn:c metric}. 
It is proved in \cite{Bel complex} that there exist {\it structure constants} $c_{i}$ such that
	\begin{equation}\label{eqn:structure constants 1}
	[X_i,X_{i+1}] = c_i \ddt \qquad \text{for } i \text{ odd} 
	\end{equation}
and that $[X_i,X_j] = 0$ whenever $\{ X_i, X_j \}$ is not a holomorphic pair.  
Moreover, in \cite{Bel complex} and \cite{Min warped} it is actually proved that, with respect to the complex hyperbolic metric $\c_n$, 
	\begin{equation}\label{eqn:structure constants 2}
	c_i = 2 \qquad \forall i.  
	\end{equation}

Let $v(r)$ and $h(r)$ be positive real-valued functions of $r$ and define $E = \R^{2n-2} \times \S^1$.
Define the warped product metric $\mu := \mu_{v,h}$ on $E \times (0,\infty)$ by
	\begin{equation}\label{eqn:mu}
	\mu =  h^2 \c_{n-1} + \frac{1}{4} v^2 d \th^2 + dr^2.
	\end{equation}
Note that when $v = \sinh(2r)$ and $h = \cosh(r)$ we recover the complex hyperbolic metric $\c_n$.
Fix a holomorphic frame $(X_i)_{i=1}^{2n-2}$ as above.  
Let $X_{2n-1} = d/d \th$ and $X_{2n} = d/ dr$. 
Define the following orthonormal frame for $\mu$:
	\begin{equation}
	Y_i = \frac{1}{h} X_i \; \text{ for } 1 \leq i \leq 2n-2 \hskip 30pt  Y_{2n-1} = \frac{1}{\frac{1}{2} v} X_{2n-1}  \hskip 30pt Y_{2n} = X_{2n}.  \label{eqn:ON basis}
	\end{equation}
Formulas for the components of the $(4,0)$ curvature tensor $R^\mu$ of $\mu$ as functions of $h$, $v$, and $c_i$ for $i = 1, 3, \hdots , 2n-3$ are given by the following Theorem.


\begin{theorem}[compare Sections 7 and 8 of \cite{Bel complex}, Theorem 4.3 of \cite{Min warped}, and Theorem 2.2 of \cite{Min CH}]\label{thm:new curvature equations}
Let $R^{\mu}_{i,j,k,l} := \mu( R^\mu (Y_i, Y_j)Y_k, Y_l )$, and let $(X_i)_{i=1}^{2n-2}$ be a holomorphic basis.
Then, up to the symmetries of the curvature tensor, formulas for all nonzero components of the (4,0) curvature tensor $R^\mu$ are:
	\begin{align*}
	&R^{\mu}_{i,j,i,j} = - \frac{1}{h^2} - \left( \frac{h'}{h} \right)^2  \hskip 40pt  R^{\mu}_{i,2n-1,i,2n-1} = - \frac{h' v'}{hv} + \frac{c_i^2 v^2}{16 h^4}  \\	
	&R^{\mu}_{i, i+1, i, i+1} = - \left( \frac{h'}{h} \right)^2 - \frac{4}{h^2} - \frac{3 c_i^2 v^2}{16 h^4} \\
	&R^{\mu}_{i,2n,i,2n} = - \frac{h''}{h}   \hskip 70pt  R^{\mu}_{2n-1,2n,2n-1,2n}= - \frac{v''}{v}  \\
	&R^{\mu}_{i, i+1, 2n-1, 2n} = 2 R^{\mu}_{i, 2n-1, i+1, 2n} = -2R^{\mu}_{i, 2n, i+1, 2n-1} = - c_{i} \frac{v}{2h^2} \left( \ln \frac{v}{h} \right)'  \\
	&R^{\mu}_{i, i+1, k, k+1} = 2R^{\mu}_{i, k, i+1, k+1} = -2R^{\mu}_{i, k+1, i+1, k} = - \frac{2}{h^2} - \frac{c_i c_k v^2}{8h^4}  
	\end{align*}
where $1 \leq i, j, k \leq 2n-2$, $k$ is an odd integer different from $i$, and $j \neq i, i+1$.  
Also, any equations using both $i$ and $i+1$ assumes that $i$ is an odd integer.
\end{theorem}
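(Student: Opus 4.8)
The plan is to work throughout with the $\mu$-orthonormal frame $(Y_a)_{a=1}^{2n}$ from \eqref{eqn:ON basis} rather than a coordinate frame, because the Koszul formula collapses dramatically for an orthonormal (though non-holonomic) frame: since $\mu(Y_a,Y_b)=\delta_{ab}$ is constant, every term in Koszul that differentiates a metric coefficient drops out, and one is left with
\[
2\mu(\nabla_{Y_a}Y_b,Y_c)=\gamma_{ab}^c-\gamma_{ac}^b-\gamma_{bc}^a,\qquad\text{where}\qquad [Y_a,Y_b]=\sum_c\gamma_{ab}^cY_c.
\]
Thus the first step is to compute all structure functions $\gamma_{ab}^c$. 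Using that $h$ and $v$ are functions of $r$ alone, that each $X_i$ ($1\le i\le 2n-2$) is invariant under both $\theta$ and $r$ (so it commutes with $\partial_\theta$ and $\partial_r$ and annihilates functions of $r$), together with \eqref{eqn:structure constants 1} and the vanishing of the remaining $X$-brackets, a short computation gives that the only nonzero brackets among frame vectors are
\[
[Y_i,Y_{i+1}]=\frac{c_i v}{2h^2}\,Y_{2n-1}\ (i\text{ odd}),\qquad
[Y_i,Y_{2n}]=\frac{h'}{h}\,Y_i,\qquad
[Y_{2n-1},Y_{2n}]=\frac{v'}{v}\,Y_{2n-1}.
\]

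Feeding these three families of structure functions into the displayed Koszul identity produces the connection coefficients $\omega_{ab}^c:=\mu(\nabla_{Y_a}Y_b,Y_c)$ explicitly as functions of $r$, built only from the quantities $h'/h$, $v'/v$, and $c_iv/(2h^2)$. The second step is then to assemble the curvature from
\[
R^{\mu}_{a,b,c,d}=\mu\!\left(\nabla_{Y_a}\nabla_{Y_b}Y_c-\nabla_{Y_b}\nabla_{Y_a}Y_c-\nabla_{[Y_a,Y_b]}Y_c,\ Y_d\right).
\]
Here I would exploit a second simplification: every $\omega_{ab}^c$ is a function of $r$ only, and among the frame vectors only $Y_{2n}=\partial_r$ has a nonzero directional derivative on such functions (both $Y_i=\tfrac1hX_i$ and $Y_{2n-1}=\tfrac2v\partial_\theta$ kill functions of $r$). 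Consequently the genuinely differentiated terms appear exactly in the components carrying a $2n$ index, where they yield the expected $-h''/h$ and $-v''/v$; all remaining components are purely algebraic in the $\omega$'s and $\gamma$'s. The diagonal components then reduce to the familiar warped-product expressions, adjusted by the curvature contribution $c_i^2v^2/(16h^4)$ coming from the $[Y_i,Y_{i+1}]$ bracket.

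The main obstacle, and the part deserving the most care, is the off-diagonal block mixing two \emph{distinct} holomorphic pairs — the components $R^{\mu}_{i,i+1,k,k+1}$ (with $k$ odd, $k\neq i$) and their relatives $R^{\mu}_{i,k,i+1,k+1}$, $R^{\mu}_{i,k+1,i+1,k}$ — together with the vertical–mixed terms $R^{\mu}_{i,i+1,2n-1,2n}$ and partners. These do not follow from any single bracket but arise from cross-products $\omega_{\cdot,\cdot}^{2n-1}\,\omega_{2n-1,\cdot}^{\cdot}$ and from the $\nabla_{[Y_a,Y_b]}$ term (whose bracket lands along $Y_{2n-1}$), so tracking the precise numerical constants and signs — in particular verifying the rigid relations $R^{\mu}_{i,i+1,k,k+1}=2R^{\mu}_{i,k,i+1,k+1}=-2R^{\mu}_{i,k+1,i+1,k}$, which encode the first Bianchi identity — is where errors are easiest to make. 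I would organize this by fixing the index conventions ($i$ odd, $j\neq i,i+1$, $k$ odd and $\neq i$) and checking each Bianchi-type identity as a consistency test. Finally, since all but one of the listed formulas already appear in \cite{Bel complex}, \cite{Min warped}, and \cite{Min CH}, I would quote those and present the computation in full only for the one new component, after confirming that the $c_i=2$ specialization \eqref{eqn:structure constants 2} and the substitution $v=\sinh(2r)$, $h=\cosh(r)$ recover the curvatures of $\c_n$ in the normalization $[-4,-1]$.
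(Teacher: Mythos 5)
Your general strategy (orthonormal-frame Koszul formula plus the curvature identity) is the standard route, and it is essentially how the cited references obtain these formulas; note that the paper itself does not carry out this computation at all --- it quotes \cite{Bel complex}, \cite{Min warped}, and \cite{Min CH} for all but one component and explicitly declines to prove the single new formula $R^{\mu}_{i,i+1,k,k+1}$, so your attempt is more ambitious than what appears in the text. However, as written your outline has a genuine gap: you assert that the only nonzero brackets among frame vectors are $[Y_i,Y_{i+1}]$, $[Y_i,Y_{2n}]$, and $[Y_{2n-1},Y_{2n}]$, that consequently every $\omega_{ab}^c$ is a function of $r$ alone, and that all components without a $2n$ index are ``purely algebraic'' in the $\omega$'s and $\gamma$'s. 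This is false for purely horizontal index triples. The brackets $[\check X_i,\check X_j]$ vanish only \emph{at the point} $p$, not in a neighborhood, so the connection coefficients $\omega_{ab}^c$ with $a,b,c\le 2n-2$ vanish at $p$ but their horizontal derivatives do not; these derivatives are exactly what produce the intrinsic curvature of $(\C\H^{n-1},\c_{n-1})$ in the terms $\nabla_{Y_a}\nabla_{Y_b}Y_c$. Running your computation literally would therefore return $R^{\mu}_{i,j,i,j}=-(h'/h)^2$, $R^{\mu}_{i,i+1,i,i+1}=-(h'/h)^2-\tfrac{3c_i^2v^2}{16h^4}$, and $R^{\mu}_{i,i+1,k,k+1}=-\tfrac{c_ic_kv^2}{8h^4}$, i.e.\ it would drop the base-curvature contributions $-1/h^2$, $-4/h^2$, and $-2/h^2$ (the last being the component $R^{\c_{n-1}}(\check X_i,\check X_{i+1},\check X_k,\check X_{k+1})=-2$ of the constant holomorphic curvature $-4$ metric, rescaled by $1/h^2$). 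So the base curvature must be fed in explicitly, either by retaining the horizontal connection of $\c_{n-1}$ and its derivatives, or by organizing the computation as in O'Neill's formulas, where $R^{\mu}_{i,i+1,k,k+1}=R^{*}_{i,i+1,k,k+1}-2\langle A_{Y_i}Y_{i+1},A_{Y_k}Y_{k+1}\rangle$ with $A_{Y_i}Y_{i+1}=\tfrac{c_iv}{4h^2}Y_{2n-1}$ recovering exactly $-\tfrac{2}{h^2}-\tfrac{c_ic_kv^2}{8h^4}$. With that correction the rest of your plan (the $r$-differentiated terms giving $-h''/h$ and $-v''/v$, the Bianchi consistency checks on the mixed components, and the $c_i=2$ specialization as a sanity check) is sound.
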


\vskip 10pt

Let us give a quick remark about the citation here.  
These formulas were originally computed by Belegradek in \cite{Bel complex} but for $c_i = 2$ for all $i$ (and with a slightly different frame, and with curvature scaled to $[-1,-1/4]$).  
The author needed these formulas with variable inputs for $c_i$, and so these were recalculated in \cite{Min CH}.  
Neither of these papers needed the last mixed term at the bottom ($\ds{ R^{\mu}_{i, i+1, k, k+1} }$), but it is needed here in order to compute the eigenvalues of the associated curvature operator.  
This mixed term was computed in \cite{Min warped} but for $c_i = c_k = 2$.  
For brevity we do not prove this one formula here.  
But one can obtain this formula in a straightforward way by combining the methods of \cite{Min warped} and \cite{Min CH}. 

The following Lemma is proved in \cite{Min CH}.  
We list it here in order to reference it in a few places later in the paper.

\begin{lemma}[See Section 2 of \cite{Min CH}]\label{thm:pinched curvature}
Let $g$ denote the metric $\mu$ from equation \eqref{eqn:mu} on $X = \R^{n-2} \times \S^1 \times (0,\infty)$ with $h(r) = \cosh(r)$ and $v(r) = \sinh(2r) = 2 \sinh(r) \cosh(r)$.  
Let $q = (p, \theta, r) \in X$.  
Then for all $\e > 0$, there exists $R > 0$ such that for all 2-planes $\sigma \subseteq T_q (X)$ where $r > R$, we have that $K_g(\s) \in (-4-\e, -1+\e)$ provided that all structure constants $c_i$ from equation \eqref{eqn:structure constants 1} satisfy $c_i \in [-2, 2]$.  
\end{lemma}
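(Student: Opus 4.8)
The plan is to reduce the statement to a bound on a single limiting algebraic curvature tensor, and then to prove that bound by reorganizing the sectional curvature into a manifestly signed expression. For a $2$-plane $\sigma$ spanned by an orthonormal pair $U, V$, the curvature $K_g(\sigma)$ is a fixed quadratic form in the Pl\"ucker coordinates $\Omega_{ab} = \langle U \wedge V, Y_a \wedge Y_b\rangle$ whose coefficients are the components $R^\mu_{a,b,c,d}$ of Theorem \ref{thm:new curvature equations}. So the first step is to compute the $r \to \infty$ limits of those components. With $h = \cosh r$ and $v = \sinh(2r)$ one has $h'/h \to 1$, $h''/h = 1$, $v'/v \to 2$, $v''/v = 4$, $1/h^2 \to 0$, $v^2/h^4 \to 4$, $v/h^2 \to 2$, and $(\ln(v/h))' \to 1$, so every component of $R^\mu$ converges to an explicit constant depending on the $c_i$ only through the monomials $c_i$, $c_i^2$, and $c_i c_k$. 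As these are bounded on $[-2,2]^{n-1}$ and the $\Omega_{ab}$ range over the compact set of unit decomposable $2$-vectors, the convergence $K_g(\sigma) \to K_\infty(\sigma;c)$ is uniform in $(\sigma, c)$. Hence it suffices to prove that the limiting tensor $R^\infty = R^\infty(c)$ satisfies $K_\infty(\sigma;c) \in [-4,-1]$ for every unit decomposable $\sigma$ and every $c \in [-2,2]^{n-1}$; uniform convergence then upgrades this to $K_g(\sigma) \in (-4-\e,-1+\e)$ for all $r$ past some $R = R(\e)$.

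The heart of the argument is the bound on $R^\infty$. I would group the orthonormal frame \eqref{eqn:ON basis} into the holomorphic pairs $P_j = (Y_{2j-1}, Y_{2j})$, $1 \le j \le n-1$, together with the vertical pair $P_0 = (Y_{2n-1}, Y_{2n})$, and write $\omega_P = \langle U\wedge V, Y\wedge JY\rangle$ for the symplectic coordinate of each pair. Substituting the limits into the curvature formulas and using the Pl\"ucker relations among the $\Omega_{ab}$ to collapse the contributions of the mixed components $R^\infty_{i,i+1,2n-1,2n}$, $R^\infty_{i,i+1,k,k+1}$ and their companions---exactly as these reproduce the model complex hyperbolic identity $K = -1 - 3\langle JU,V\rangle^2$ recovered when all $c_i = 2$---I expect the sectional curvature to collapse to the form
	\begin{equation*}
	K_\infty(\sigma;c) = -1 - 3\,\big\langle \tilde{J}U, V\big\rangle^2 - \sum_{j} \Big(1 - \tfrac{c_j^2}{4}\Big)\, T_j,
	\end{equation*}
where $\tilde{J}$ acts as $\tfrac{c_j}{2}J$ on $P_j$ and as $J$ on $P_0$, and each $T_j \ge 0$ is a sum of squares of certain off--pair Pl\"ucker coordinates. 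I have verified this shape in the base case $n = 2$. Because $|c_j| \le 2$, both correction terms are nonpositive, which gives the upper bound $K_\infty \le -1$ for free.

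The main obstacle is the lower bound $K_\infty \ge -4$, equivalently $3\langle \tilde J U, V\rangle^2 + \sum_j (1 - \tfrac{c_j^2}{4}) T_j \le 3$. Here decomposability is essential and not merely cosmetic: the operator $\tilde J$ is norm--nonincreasing since each $|c_j/2| \le 1$, so $\langle \tilde J U, V\rangle^2 \le |\tilde J U|^2 \le 1$ yields the clean estimate only at the ``vertices'' $c_j \in \{-2,2\}$; for intermediate $c$ the naive Cauchy--Schwarz bound on the holomorphic term together with the correction terms can exceed $3$ if one treats the $\Omega_{ab}$ as free subject only to $\sum \Omega_{ab}^2 = 1$. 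Indeed, already for $n = 2$ and $c = 1$ the unconstrained maximum of the correction is $\tfrac{15}{4} > 3$, attained by a two--form that violates the Pl\"ucker relation $\omega_{P_0}\omega_{P_1} = \Omega_{13}\Omega_{24} - \Omega_{14}\Omega_{23}$. Thus the lower bound genuinely requires feeding the Pl\"ucker relations back in, to prevent the holomorphic term from being large across several pairs simultaneously while the off--pair coordinates vanish. I would carry this out using the $\Lambda^+ \oplus \Lambda^-$ (equivalently $\S^2 \times \S^2$) description of unit decomposable $2$--vectors to reduce to the mutual interaction of at most two blocks at a time, where the combined bound $3\langle \tilde J U,V\rangle^2 + \sum_j(1-\tfrac{c_j^2}{4})T_j \le 3$ becomes an elementary, if delicate, estimate tied together by the normalization $\sum \Omega_{ab}^2 = 1$.
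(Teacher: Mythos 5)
First, a point of comparison: the paper does not prove this lemma internally --- it is imported from Section 2 of \cite{Min CH} --- so there is no in-text argument to measure you against; your attempt has to stand on its own. The good news is that most of it does. The reduction to the $r\to\infty$ limiting algebraic tensor is sound: the limits you list ($h'/h\to1$, $h''/h=1$, $v'/v\to2$, $v''/v=4$, $v^2/h^4\to4$, $v/h^2\to2$, $(\ln(v/h))'\to1$) are correct, and uniformity in $(\sigma,c)$ over the compact parameter space is immediate since $K_g(\sigma)$ is a linear combination of the components of $R^\mu$ with coefficients bounded by $1$. Your structural identity $K_\infty=-1-3\langle\tilde JU,V\rangle^2-\sum_j\bigl(1-\tfrac{c_j^2}{4}\bigr)T_j$ also checks out against Theorem \ref{thm:new curvature equations} in all dimensions, with $T_j=\Omega_{2j-1,2n-1}^2+\Omega_{2j,2n-1}^2$: the Pl\"ucker relations convert the excess cross terms $-\tfrac12 c_ic_k\,\Omega_{i,i+1}\Omega_{k,k+1}$ and $-c_i\,\Omega_{i,i+1}\Omega_{2n-1,2n}$ exactly into the mixed components $R^\mu_{i,k,i+1,k+1}$, $R^\mu_{i,k+1,i+1,k}$, $R^\mu_{i,2n-1,i+1,2n}$, $R^\mu_{i,2n,i+1,2n-1}$, and the leftover diagonal discrepancy sits precisely on the coordinates $\Omega_{a,2n-1}$ (the $\partial/\partial\theta$ direction, whose coefficient is $-2+c_i^2/4$ rather than $-1$). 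This gives the upper bound $K_\infty\le-1$ cleanly, and it is a genuinely nice way to organize the tensor.

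The gap is the lower bound $K_\infty\ge-4$, which you correctly identify as the main obstacle but then do not prove: the proposal ends with a plan (``I would carry this out using\dots'') rather than an argument. Concretely, the proposed reduction ``to the mutual interaction of at most two blocks at a time'' via the $\Lambda^+\oplus\Lambda^-$ picture is specific to $\mathbb{R}^4$ and is not justified for $n>2$: a $2$-plane in $\mathbb{R}^{2n}$ can have nonzero Pl\"ucker coordinates across all $n$ blocks simultaneously, and the quantity to be bounded, $3\bigl(\sum_j\tfrac{c_j}{2}\omega_j+\omega_0\bigr)^2+\sum_j\bigl(1-\tfrac{c_j^2}{4}\bigr)T_j$, couples the vertical block to every holomorphic block at once through both the square and the $T_j$. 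Nor do the obvious coarse estimates close it: using $|\omega_j|\le|u^{(j)}||v^{(j)}|$ and $\sum_jT_j\le u_{2n-1}^2+v_{2n-1}^2-\omega_0^2$ one is led to bound $3\cos^2(\sigma-\tau)+\sin^2\sigma+\sin^2\tau$, which exceeds $3$, so the cancellation between $\omega_0^2$ and $u_{2n-1}^2+v_{2n-1}^2$ (and the constraint that a large value of $\langle\tilde JU,V\rangle$ forces the off-pair coordinates appearing in the $T_j$ to be small) must be tracked exactly. Until the inequality $3\langle\tilde JU,V\rangle^2+\sum_j\bigl(1-\tfrac{c_j^2}{4}\bigr)T_j\le3$ is actually established for all unit decomposable $U\wedge V$ and all $c\in[-2,2]^{n-1}$, the proof is incomplete; everything before that point is correct.
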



The {\it integrable complex hyperbolic metric} $g_{I}$ is defined by setting $h(r) = \cosh(r)$ and $v(r) = \sinh(2r) = 2 \sinh(r) \cosh(r)$ in equation \eqref{eqn:mu}, and setting all structure constants defined in \eqref{eqn:structure constants 1} identically equal to zero.  
This is the metric one would obtain if the complex hyperbolic metric were integrable, that is, if the horizontal fiber $d \phi^{-1} (T_p \C \H^{n-1})$ were always tangent to $E(r)$.  
So, as a metric, we have that
	\begin{equation*}
	g_{I} = \cosh^2(r) \c_{n-1} + \frac{1}{4} \sinh^2(2r) d \th^2 + dr^2
	\end{equation*}
but with different structure constants than the metric $\c_n$.  

By Lemma \ref{thm:pinched curvature} we know that, given $\e > 0$, all sectional curvatures of $g_I$ lie in $(-4-\e, -1+\e)$ for $r$ sufficiently large. 
But more important for our purposes is that, for $r$ sufficiently large, all mixed terms of the curvature tensor are approximately 0 (see the equations in Theorem \ref{thm:new curvature equations}).  
Thus, for $r$ large the basis $(Y_i)$ (approximately) diagonalizes the curvature operator $\cR$ with respect to $g_I$, and the eigenvalues of $\cR$ are (approximately) the sectional curvatures of the coordinate planes.  
We will give explicit values for these eigenvalues in Section \ref{section3: eigenvalues}.


\vskip 20pt

\section{Eigenvalues for the curvature operator associated to $\mu$}\label{section3: eigenvalues}
In this Section we analyze the eigenvalues of the curvature operator $\cR$ with respect to the metric $\mu$ from equation \eqref{eqn:mu} defined on $\R^{2n-2} \times \S^1 \times (0, \infty)$.  
For the remainder of this Section we only consider the warping functions $h(r) = \cosh(r)$ and $v(r) = \sinh(2r)$ for the metric $\mu$.  
Note that when $c_i = 2$ for all $i$ this will yield the eigenvalues for the curvature operator with respect to the complex hyperbolic metric $\c_n$, and when $c_i = 0$ for all $i$ this will approximate the eigenvalues for the curvature operator of the integrable complex hyperbolic metric $g_I$ for $r$ large.  

It is instructive to first consider the cases for complex dimensions $n=2$ and $3$.  
From this work it will be easy to state and prove the results for general $n$.

\subsection{The n=2 case}  The metric $\mu$ is defined on the space $X = \R^2 \times \S^1 \times (0, \infty)$.  
Let $q = (p, \theta, r) \in X$, and let $(\check{X}_1, \check{X}_2)$ be a holomorphic basis near $p$.  
Extend this basis to a holomorphic frame $(X_1, X_2)$ about $q$ as described in Section \ref{section2: polar coordinates}.  
Let $X_3 = d/d \th$ and $X_4 = d/dr$.  
Note that the only nontrivial Lie bracket within this basis is $[X_1, X_2] = c_1 (d/d \th)$.  

Let
	\begin{equation*}
	Y_1 = \frac{1}{\cosh(r)} X_1 \qquad Y_2 = \frac{1}{\cosh(r)} X_2 \qquad Y_3 = \frac{1}{\sinh(r) \cosh(r)} X_3 \qquad Y_4 = X_4.
	\end{equation*}
Then $(Y_1, Y_2, Y_3, Y_4)$ is an orthonormal basis for $\mu$ and, consequently, 
	\begin{equation*}
	(Y_1 \wedge Y_2, Y_1 \wedge Y_3, Y_1 \wedge Y_4, Y_2 \wedge Y_3, Y_2 \wedge Y_4, Y_3 \wedge Y_4)
	\end{equation*}
is an orthonormal basis for $\Lambda^2(T_q M)$ with respect to the inner product described in \eqref{eqn:curv op inner product}.  

By reordering this basis appropriately we can obtain a block-diagonal representation of $\cR$.  
This reordering is as follows.  
We first list the basis vectors that correspond to holomorphic pairs.  
So, in this case, we start by listing $Y_1 \wedge Y_2$ and $Y_3 \wedge Y_4$.  
Then, given a vector of the form $Y_i \wedge Y_j$, we pair it with the vector $Y_{i'} \wedge Y_{j'}$ where $\{ i, i' \}$ and $\{ j, j' \}$ are each holomorphic pairs.  
This gives us the ordered basis
	\begin{equation*}
	\beta = (Y_1 \wedge Y_2, Y_3 \wedge Y_4, Y_1 \wedge Y_3, Y_2 \wedge Y_4, Y_1 \wedge Y_4, Y_2 \wedge Y_3).  
	\end{equation*}
With respect to $\beta$, the matrix representation of $\cR$ can be approximated for $r > > 0$ by
	\begin{equation}
	[\cR]_{\beta} = \begin{bmatrix}
	-1-\frac{3}{4}c_1^2 & -c_1 & 0 & 0 & 0 & 0  \\
	-c_1 & -4 & 0 & 0 & 0 & 0  \\
	0 & 0 & -2 + \frac{1}{4} c_1^2 & - \frac{1}{2} c_1 & 0 & 0  \\
	0 & 0 & - \frac{1}{2} c_1 & -1  & 0 & 0  \\
	0 & 0 & 0 & 0 & -1 & \frac{1}{2} c_1  \\
	0 & 0 & 0 & 0 & \frac{1}{2} c_1 & -2 + \frac{1}{4} c_1^2
	\end{bmatrix}
	\end{equation}
Note that this matrix gives the exact values for the curvature operator when $c_1 = 2$, and is an approximation for $r$ large when $c_1 \in [0,2)$.  

The first block in the top-left corner of the matrix corresponds to the bivectors that form a holomorphic pair.  
We will refer to this block as the {\it holomorphic block} and denote it by $H_2$.  
One sees immediately that this block is negative definite:  the (1,1) entry $-1 - \frac{3}{4} c_1^2$ is negative over $c_1 \in [0,2]$, and the determinant 
	\begin{equation}\label{eqn:detS2}
	\text{det}(H_2) = 4 + 2c_1^2
	\end{equation}
is positive over this same domain.  
Therefore, the eigenvalues of $\cR$ corresponding to the holomorphic block are negative, and this holds for all values of $c_1$ provided $r$ is chosen sufficiently large.  
A direct calculation shows that the eigenvalues for the holomorphic block are -6 and -2 when $c_1 = 2$.  
The eigenspace associated to the eigenvalue $-6$ when $c_1 = 2$ is the span of the vector $\begin{bmatrix} 1 & 1 & 0 & 0 & 0 & 0 \end{bmatrix} = (Y_1 \wedge Y_2) + (Y_3 \wedge Y_4)$.  
When $c_1 = 0$ the eigenvalues of $H_2$ are $-1$ and $-4$.  
So these values approximate two of the eigenvalues of the curvature operator of $g_I$ for $r$ large.  

The remaining two blocks are identical up to reordering the basis vectors and correspond to a pair of bivectors of the form $Y_i \wedge Y_j$ where one vector is horizontal and the other vector is vertical.  
The characteristic polynomial for these blocks factors as 
	\begin{equation}\label{eqn:block 1}
	\left( \l + 2 \right) \left( \l + \left( 1-\frac{1}{4}c_1^2 \right) \right)
	\end{equation}
and so the eigenvalues are -2 and $-1 + \frac{1}{4} c_1^2$.  
For $c_1 = 2$ this gives eigenvalues of -2 and 0 (each with multiplicity 2) for the curvature operator of the complex hyperbolic metric.
When $c_1 = 0$ this gives (approximate) eigenvalues of $-2$ and $-1$ for the curvature operator associated to $g_I$.  

Since this block has an eigenvalue of 0 when $c_1 = 2$, approximating this for $r$ large is not sufficient to guarantee that all eigenvalues are nonpositive for all $c_1 \in [0,2]$.  
If one plugs in the actual values from Theorem \ref{thm:new curvature equations} into this block, they obtain
	\begin{equation*}
	\begin{bmatrix}
	-1 - \frac{(4-c_1^2) \sinh^2(r)}{4 \cosh^2(r)} & - \frac{1}{2} c_1 \\
	- \frac{1}{2} c_1 & -1
	\end{bmatrix}
	\end{equation*}
The characteristic polynomial for this matrix is
	$$ \l^2 + \left( 2 + \frac{(4-c_1^2) \sinh^2(r)}{4 \cosh^2(r)} \right) \l + \left( 1 + \frac{(4-c_1^2) \sinh^2(r)}{4 \cosh^2(r)} - \frac{1}{4} c_1^2 \right) $$
and the zeros of this polynomial are
	\begin{equation*}
	\l = \frac{1}{2} \left( -2 - \frac{4-c_1^2}{4} \tanh^2(r) \pm \sqrt{\frac{(4-c_1^2)^2}{16} \tanh^4(r) + c_1^2} \right)
	\end{equation*}
One recoups the eigenvalues of -2 and 0 when $c_1 = 2$, and for $c_1 \in [0,2)$ the triangle inequality shows that both zeros are negative.

\subsection{The n=3 case}  The metric $\mu$ is defined on the space $X = \R^4 \times \S^1 \times (0, \infty)$.  
In the exact same way as above, let $q = (p, \theta, r) \in X$, and let $(\check{X}_1, \check{X}_2, \check{X}_3, \check{X}_4)$ be a holomorphic basis near $p$.  
Extend this basis to a holomorphic frame $(X_1, X_2, X_3, X_4)$ about $q$.  
Note that the holomorphic pairs are $(X_1, X_2)$ and $(X_3, X_4)$.  
Let $X_5 = d/d \th$ and $X_6 = d/dr$.  
Note that the only nontrivial Lie brackets within this basis are $[X_1, X_2] = c_1 X_5$ and $[X_3, X_4] = c_3 X_5$.  

Let
	\begin{equation*}
	Y_i = \frac{1}{\cosh(r)} X_i \text{ for i=1, 2, 3, 4} \qquad Y_5 = \frac{1}{\sinh(r) \cosh(r)} X_5 \qquad Y_6 = X_6.
	\end{equation*}
Then $(Y_i)_{i=1}^6$ is an orthonormal basis for $\mu$.  
Using the same ordering as the $n=2$ case, we obtain the following  basis
	\begin{align*}
	\beta = (&\underline{Y_1 \wedge Y_2, Y_3 \wedge Y_4, Y_5 \wedge Y_6}, \overline{Y_1 \wedge Y_3, Y_2 \wedge Y_4}, \underline{Y_1 \wedge Y_4, Y_2 \wedge Y_3}, \overline{Y_1 \wedge Y_5, Y_2 \wedge Y_6},  \\
	&\underline{Y_1 \wedge Y_6, Y_2 \wedge Y_5}, \overline{Y_3 \wedge Y_5, Y_4 \wedge Y_6}, \underline{Y_3 \wedge Y_6, Y_4 \wedge Y_5})
	\end{align*}
which is an orthonormal basis for $\Lambda^2(T_q M)$ with respect to the metric \eqref{eqn:curv op inner product}.  
The under and over lines in the basis are just to indicate which collections of bivectors will correspond to blocks along the diagonal of the matrix representation for $\cR$.  
Note that the first block is the collection of holomorphic pairs, and the remaining blocks consist of two bivectors where an index from each bivector forms a holomorphic pair.  

For $r$ large, the holomorphic block can be approximated by
	\begin{equation*}
	H_3 = \begin{bmatrix}
	-1 - \frac{3}{4}c_1^2 & -\frac{1}{2} c_1 c_3 & -c_1  \\
	- \frac{1}{2} c_1 c_3 & -1 - \frac{3}{4} c_3^2 & -c_3  \\
	-c_1 & -c_3 & -4
	\end{bmatrix}
	\end{equation*}
One can see that this matrix is negative definite by induction. 
The lower-right $2 \times 2$ matrix is the holomorphic block in the $n=2$ case (with $c_3$ in place of $c_1$) and therefore is negative definite.  
The determinant of $H_3$ is
	\begin{equation}\label{eqn:detS3}
	\text{det}(H_3) = -4 - 2c_1^2 - 2c_3^2 - \frac{3}{4} c_1^2 c_3^2
	\end{equation}
which is clearly negative for $c_1, c_3 \in [0,2]$.  
Thus, all eigenvalues of $H_3$ are negative, and therefore all eigenvalues of the holomorphic block are negative for $r$ sufficiently large.  

Note that the entries of $H_3$ when $c_1 = c_3 = 2$ are $-4$ along the diagonal and $-2$ off of the diagonal.  
A direct calculation shows that the eigenvalues of this matrix are $-2$ (with multiplicity $2$) and $-8$.  
The eigenvalue of $-8$ has an eigenvector of $\begin{bmatrix}1& 1 & 1 \end{bmatrix} = (Y_1 \wedge Y_2) + (Y_3 \wedge Y_4) + (Y_5 \wedge Y_6)$.  
When $c_1 = c_3 = 0$ the matrix is diagonal with eigenvalues of $-1$ (with multiplicity 2) and $-4$.  
At this point one can start to see that, as each $c_i$ varies from $2$ to $0$, the two eigenvalues of $-2$ increase to $-1$ while the remaining eigenvalue of $-(2n+2)$ increases to $-4$.  

There are six additional blocks in the matrix representation $[\cR]_{\beta}$.  
The blocks corresponding to the pairs $(Y_1 \wedge Y_5, Y_2 \wedge Y_6)$, $(Y_1 \wedge Y_6, Y_2 \wedge Y_5)$, $(Y_3 \wedge Y_5, Y_4 \wedge Y_6)$, and $(Y_3 \wedge Y_6, Y_4 \wedge Y_5)$ are all identical to what was considered in the $n=2$ case (except with $c_1$ replaced with $c_3$ in the latter two cases).  

The remaining two blocks correspond to the pairs $(Y_1 \wedge Y_3, Y_2 \wedge Y_4)$ and $(Y_1 \wedge Y_4, Y_2 \wedge Y_3)$.  
These blocks did not appear in the $n=2$ case since the horizontal fiber $\R^{2n-2} = \R^2$ was not large enough to contain two distinct holomorphic pairs.  
For $r$ large, this block can be approximated by
	\begin{equation}\label{eqn:block 2}
	\begin{bmatrix}
	-1 & - \frac{1}{4} c_1 c_3  \\
	- \frac{1}{4} c_1 c_3 & -1
	\end{bmatrix}
	\end{equation}
The characteristic polynomial factors as
	\begin{equation*}
	\left( \l - \left( -1 + \frac{1}{4} c_1 c_3 \right) \right) \left( \l - \left( -1 - \frac{1}{4} c_1 c_3 \right) \right).
	\end{equation*}
From here one sees immediately that both eigenvalues are nonpositive, and that the eigenvalues when $c_1 = c_3 = 2$ are $-2$ and $0$.    
These are the same eigenvalues that come from the previous four blocks which makes sense:  since the complex hyperbolic metric is symmetric it should not matter if the holomorphic pairs both come from the horizontal fiber or if one of them comes from the vertical fiber.  
The eigenvalues when $c_1 = c_3 = 0$ are both (approximately) $-1$.

Since one of the eigenvalues is again 0, we must consider the case for general r.  
For all values of $r$ this block has the form
	\begin{equation*}
	\begin{bmatrix}
	-1 & \frac{-c_1 c_3 \sinh^2(r) - 4}{4 \cosh^2(r)}  \\
	\frac{-c_1 c_3 \sinh^2(r) - 4}{4 \cosh^2(r)} & -1
	\end{bmatrix}
	\end{equation*}
This case is simpler than the other blocks in that the characteristic polynomial factors as
	\begin{equation*}
	\left( \l - \left( -1 + \frac{-c_1 c_3 \sinh^2(r) - 4}{4 \cosh^2(r)} \right) \right) \left( \l - \left( -1 - \frac{-c_1 c_3 \sinh^2(r) - 4}{4 \cosh^2(r)} \right) \right)
	\end{equation*}
One can see that the larger eigenvalue, which comes from the right-hand term, is nonpositive:
	\begin{align*}
	-1 - \frac{-c_1 c_3 \sinh^2(r) - 4}{4 \cosh^2(r)} &\leq 0  \\
	\Longleftrightarrow \quad \frac{c_1 c_3 \sinh^2(r) + 4}{4 \cosh^2(r)} &\leq 1  \\
	\Longleftrightarrow \qquad \hskip 9pt c_1 c_3 \sinh^2(r) &\leq 4\cosh^2(r) - 4 = 4 \sinh^2(r)  \\
	\Longleftrightarrow \qquad \hskip 46pt c_1 c_3 &\leq 4.
	\end{align*}

\subsection{The case for general $n$}
We formalize the arguments from the previous two Subsections to prove the following.  

\vskip 10pt

\begin{theorem}\label{thm: structure constant operator}
The metric $\mu$ from equation \eqref{eqn:mu} has nonpositive curvature operator for the values $h(r) = \cosh(r)$, $v(r) = \sinh(2r)$, $c_i \in [0,2]$ for all $i$, and for $r$ sufficiently large.  
Moreover, when $c_i = 2$ for all $i$, the metric $\mu$ is equal to the complex hyperbolic metric $\c_n$ and has eigenvalues
	\begin{itemize}
	\item  $0$ with multiplicity $n^2 - n$.  
	\item  $-2$ with multiplicity $n^2 - 1$.  
	\item  $-(2n+2)$ with multiplicity $1$.  
	\end{itemize}
Lastly, the eigenspace corresponding to the eigenvalue of $-(2n+2)$ is equal to the span of the vector
	$$ \sum_{k=0}^{n} \left( Y_{2k-1} \wedge Y_{2k} \right) $$
where the basis $(Y_i)$ is as in \eqref{eqn:ON basis}.  
\end{theorem}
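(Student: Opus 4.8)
The plan is to generalize the block-diagonalization used in the $n=2$ and $n=3$ cases. First I would fix the orthonormal frame $(Y_i)$ from \eqref{eqn:ON basis} and order the induced basis of $\L^2(T_q M)$ exactly as there: list the $n$ holomorphic-pair bivectors $Y_{2k-1}\wedge Y_{2k}$ (for $k=1,\dots,n$, including the vertical pair $Y_{2n-1}\wedge Y_{2n}$) first, then group the remaining $2n^2-2n$ bivectors into partner pairs $\{Y_i\wedge Y_j,\, Y_{i'}\wedge Y_{j'}\}$, where $i\mapsto i'$ sends each index to its holomorphic partner. The crucial structural observation, which I would verify directly against the list in Theorem \ref{thm:new curvature equations}, is that \emph{every} nonzero component $R^\mu_{i,j,k,l}$ with $\{i,j\}\neq\{k,l\}$ is one of the listed mixed terms, and each such term connects either two holomorphic-pair bivectors (terms $R^\mu_{i,i+1,2n-1,2n}$ and $R^\mu_{i,i+1,k,k+1}$) or a bivector to its partner (terms $R^\mu_{i,2n-1,i+1,2n}$, $R^\mu_{i,2n,i+1,2n-1}$, $R^\mu_{i,k,i+1,k+1}$, $R^\mu_{i,k+1,i+1,k}$). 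Consequently $[\cR]_\beta$ is block diagonal with one $n\times n$ \emph{holomorphic block} $H_n$ together with $(n-1)n$ blocks of size $2\times 2$.

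For the holomorphic block I would use the $r\to\infty$ values of Theorem \ref{thm:new curvature equations}. Writing $\g_a = c_{2a-1}$ for $a<n$ and setting $\g_n := 2$ as a bookkeeping device for the vertical pair, a short computation shows the limiting $H_n$ has diagonal entries $-1-\tfrac34\g_a^2$ and off-diagonal entries $-\tfrac12\g_a\g_b$; that is,
\[
H_n = -\Bigl(I + \tfrac14\,\mathrm{diag}(\g_1^2,\dots,\g_n^2) + \tfrac12\,\g\g^{T}\Bigr),\qquad \g=(\g_1,\dots,\g_n)^{T}.
\]
Since the two matrices added to $I$ are positive semidefinite, the bracketed matrix is $\succeq I\succ 0$, so $H_n\preceq -I$ is negative definite with eigenvalues bounded away from $0$ for all $\g_a\in[0,2]$; by continuity of eigenvalues this persists for the exact block once $r$ is large. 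When every $c_i=2$ we have $\g=2\mathbf 1$ and $H_n=-2(I+\mathbf 1\mathbf 1^{T})$ (with $\mathbf 1$ the all-ones vector of this block), whose eigenvalues are $-(2n+2)$ on $\mathbf 1$ and $-2$ on $\mathbf 1^{\perp}$. This single computation simultaneously produces the eigenvalue $-(2n+2)$ of multiplicity $1$, its eigenvector $\sum_{k} Y_{2k-1}\wedge Y_{2k}$, and $n-1$ copies of $-2$.

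For the $2\times 2$ blocks there are two shapes, both already analyzed in the small cases. The \emph{Type A} blocks, coming from two distinct horizontal holomorphic pairs $a\neq b$, reduce to \eqref{eqn:block 2} with $c_1c_3$ replaced by $\g_a\g_b$, giving eigenvalues $-1\pm\tfrac14\g_a\g_b$. The \emph{Type B} blocks, coming from one horizontal pair $a$ together with the vertical pair, reduce to the $n=2$ block with characteristic polynomial \eqref{eqn:block 1} (with $c_1$ replaced by $\g_a$), giving eigenvalues $-2$ and $-1+\tfrac14\g_a^2$. In both shapes the $r\to\infty$ value of the larger eigenvalue is exactly $0$ when the relevant $c_i$ equal $2$, so the asymptotic values alone do not establish nonpositivity; here I would instead substitute the exact entries from Theorem \ref{thm:new curvature equations} and repeat the elementary estimates already carried out in the $n=2$ and $n=3$ cases, which reduce to $\g_a\g_b\le 4$ (Type A) and $\g_a\le 2$ (Type B) and therefore hold for \emph{all} $r$.

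Finally I would assemble the spectrum at $c_i=2$ by counting blocks: the holomorphic block gives $-(2n+2)$ once and $-2$ with multiplicity $n-1$; the $2\binom{n-1}{2}=(n-1)(n-2)$ Type A blocks and the $2(n-1)$ Type B blocks each contribute one $0$ and one $-2$. Summing gives $0$ with multiplicity $(n-1)(n-2)+2(n-1)=n^2-n$ and $-2$ with multiplicity $(n-1)+(n-1)(n-2)+2(n-1)=n^2-1$, consistent with the total dimension $2n^2-n$. The main obstacle I anticipate is not any one calculation but the bookkeeping of the first step: one must be certain that the listed curvature components are genuinely exhaustive up to symmetry and that the partner-pairing respects each of them, so that no off-block entry is overlooked. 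The secondary subtlety is the large-$r$-versus-exact distinction for the blocks whose limiting eigenvalue is $0$, which is precisely what forces the exact treatment of the $2\times 2$ blocks rather than reliance on the asymptotic sectional curvatures.
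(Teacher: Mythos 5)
Your proposal is correct and follows the paper's overall strategy --- the same ordering of the bivector basis, the same block-diagonal structure ($H_n$ plus $n^2-n$ blocks of size $2\times 2$), the same exact (all-$r$) treatment of the $2\times 2$ blocks whose limiting top eigenvalue is $0$, and the same final count of multiplicities. The one place where you genuinely diverge is the negative definiteness of the holomorphic block. The paper argues by Sylvester's criterion and induction, reducing to the sign of $\det(-H_n)$, which it controls through a cofactor expansion and a critical-point argument in the variables $c_i$ (and, incidentally, the paper's parenthetical claims there about the determinant vanishing and lying in $[0,4]$ when $c_i=2$ sit awkwardly with its own explicit formulas $\det(-H_2)=2c_1^2+4$ and $\det(-H_3)=\tfrac34 c_1^2c_3^2+2c_1^2+2c_3^2+4$). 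Your decomposition
\[
-H_n \;=\; I \;+\; \tfrac14\,\mathrm{diag}(\gamma_1^2,\dots,\gamma_n^2)\;+\;\tfrac12\,\gamma\gamma^{T},\qquad \gamma_a=c_{2a-1}\ (a<n),\ \gamma_n=2,
\]
is a clean and correct reproduction of the limiting entries ($-1-\tfrac34\gamma_a^2$ on the diagonal, $-\tfrac12\gamma_a\gamma_b$ off it, with the vertical row recovered by $\gamma_n=2$), and it buys you more than the paper's argument does: it gives the uniform spectral gap $H_n\preceq -I$ over all $\gamma\in[0,2]^{n-1}$, which makes the passage from the approximate to the exact block for large $r$ immediate, and at $c_i=2$ it collapses to $-2(I+\mathbf{1}\mathbf{1}^{T})$, from which the eigenvalue $-(2n+2)$ with eigenvector $\sum_k Y_{2k-1}\wedge Y_{2k}$ and the $(n-1)$-fold eigenvalue $-2$ fall out in one line, exactly as in the paper's final computation. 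Your emphasis on verifying that every nonzero mixed curvature component of Theorem \ref{thm:new curvature equations} stays within a single block is well placed; that exhaustiveness check is the load-bearing bookkeeping step in both your argument and the paper's.
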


\begin{proof}
The metric $\mu$ is defined on the space $X = \R^{2n-2} \times \S^1 \times (0, \infty)$.  
Let $q = (p, \theta, r) \in X$ and, as in Section \ref{section2: polar coordinates}, choose a holomorphic frame $(X_i)_{i=1}^{2n-2}$ about $q$.
Recall the values of the Lie brackets from equation \eqref{eqn:structure constants 2}.
Let
	\begin{equation*}
	Y_i = \frac{1}{\cosh(r)} X_i \text{ for } i=1, \hdots, 2n-2 \qquad Y_{2n-1} = \frac{1}{\sinh(r) \cosh(r)} \ddt \qquad Y_{2n} = \ddr.
	\end{equation*}
Then $(Y_i)_{i=1}^{2n}$ is an orthonormal basis for $\mu$.  

To calculate the eigenvalues for the curvature operator of $\mu$ we order the basis in the same manner as in the $n=3$ case.  
The blocks along the diagonal come in three forms, the same forms that arise when $n=3$.  
The only block that is different is the holomorphic block, which is size $n \times n$ and corresponds to the holomorphic pairs $Y_i \wedge Y_{i'}$.   
The other two types of blocks are identical to what was considered when $n=2$ or $3$.  
Recall that, when $c_i = 2$ for all $i$, each of these $2 \times 2$ blocks contributed an eigenvalue of -2 and 0.  
There are
	\begin{equation*}
	\frac{1}{2} \left( {2n \choose 2} - n \right) = n^2 - n
	\end{equation*}
of these blocks.  
And so the curvature operator $\cR$ for the complex hyperbolic metric inherits eigenvalues of -2 and 0, each with multiplicity $n^2 - n$, from these blocks.  

To prove the first part of Theorem \ref{thm: structure constant operator} we need to show that the holomorphic block $H_n$ is negative definite.  
For $r$ large this symmetric $n \times n$ matrix can be approximated by
	\begin{equation*}
	H_n = \begin{bmatrix}
	-1 - \frac{3}{4} c_1^2 & - \frac{1}{2} c_1 c_3 & - \frac{1}{2} c_1 c_5 & \hdots & - \frac{1}{2} c_1 c_{2n-3} & -c_1  \\
	- \frac{1}{2} c_1 c_3 & -1 - \frac{3}{4} c_3^2 & - \frac{1}{2} c_3 c_5 & \hdots & - \frac{1}{2} c_3 c_{2n-3} & - c_3  \\
	- \frac{1}{2} c_1 c_5 & - \frac{1}{2} c_3 c_5 & -1 - \frac{3}{4} c_5^2 & \hdots & - \frac{1}{2} c_5 c_{2n-3} & - c_5  \\
	\vdots & \vdots & \vdots & \ddots & \vdots & \vdots  \\
	- \frac{1}{2} c_1 c_{2n-3} & - \frac{1}{2} c_3 c_{2n-3} & - \frac{1}{2} c_5 c_{2n-3} & \hdots & -1 - \frac{3}{4} c_{2n-3}^2 & -c_{2n-3}  \\
	-c_1 & -c_3 & -c_5 & \hdots & -c_{2n-3} & -4
	\end{bmatrix}
	\end{equation*}
To show that $H_n$ is negative definite we instead show that $-H_n$ is positive definite.  
The submatrix obtained by removing the first row and column of $-H_n$ is $- H_{n-1}$, up to renaming the structure constants.  
So, by induction, we just need to show that the sign of the determinant of $-H_n$ is positive for all $c_i \in [0,2)$ (the determinant is $0$ when $c_i = 2$ for all $i$).  

We can give an exact description of this determinant.  
Considering $c_1^2, c_3^2, \hdots , c_{2n-3}^2$ as independent variables, the determinant of $- H_n$ has degree $n-1$ and, within each term of this polynomial, every variable (ie, each $c_i^2$) appears with power either zero or one.  
If a given term has degree $k$, meaning that $k$ of the $c_{i}^{2}$'s appear in this term, the coefficient is $\ds{ \frac{4(k+1)}{4^k} }$.  

So, for example, if $n=2$ then det$(-H_2$) has degree 1.  
The constant term is $\ds{ \frac{4(0+1)}{4^0} = 4}$, and the coefficient of the degree 1 term is $\ds{ \frac{4(1+1)}{4^1} = 2 }$.  
Thus, det($-H_2$) = $2c_1^2 + 4$ in agreement with \eqref{eqn:detS2}.  
The determinant of $-H_3$ has degree 2.  
The coefficients of the constant and degree 1 terms are the same as above.  
The degree 2 term has coefficient $\ds{ \frac{4(2+1)}{4^2} = \frac{3}{4} }$.  
Therefore, 
	\begin{equation*}
	\text{det} (-H_3) = \frac{3}{4} c_1^2 c_3^2 + 2c_1^2 + 2c_3^2 + 4 
	\end{equation*}
which coincides with \eqref{eqn:detS3}.  

Proving the above formula for the determinant of $-H_n$ via induction is difficult, but a simpler argument can be used to show that $-H_n$ is has positive determinant.  
Consider cofactor expansion along the top row of $-H_n$.  
This determinant has the form
	\begin{equation*}
	\left( 1 + \frac{3}{4} c_1^2 \right) (\text{det}(-H_{n-1})) + c_1^2 ( \text{sum of determinant of minors removing the top row} )
	\end{equation*}
where the $c_1^2$ in the second term comes from the fact that every entry in the top row of $H_n$ has a $c_1$ factor, and in each minor we can factor a $c_1$ out of the first column.  
Taking the partial derivative with respect to $c_1$ and then applying basic calculus shows that the only critical point of this determinant, when considered as a function of $c_1, c_3, \hdots, c_{2n-3}$, must satisfy $c_1 = 0$.  
By symmetry, we then must have that $c_i = 0$ for all $i$.  
But we have already shown that $\text{det}(-H_n) = 4$ when $c_i = 0$ for all $i$.  
Then, since this same determinant is $0$ when $c_i = 2$ for all $i$, we see that all values for this determinant must fall in the interval $[0,4]$ for the specified values of $c_i$.  

Finally, we summarize the eigenvalues of the curvature operator with respect to the standard complex hyperbolic metric.  
When $c_i = 2$ for all $i$, the matrix $H_n$ has $-4$ for each diagonal entry and $-2$ for each off-diagonal entry.  
One can see directly that this matrix has eigenvalues of $-2$ (with multiplicity $n-1$) and $-(2n+2)$ (with multiplicity $1$) as follows.  
The matrix $H_n + 2I_n$ is the constant matrix where every entry is $-2$. 
The dimension of the kernel of this matrix is clearly $n-1$, proving that $-2$ is an eigenvalue with multiplicity $n-1$.  
Finally, if one multiplies $H_n$ by the vector $\begin{bmatrix}
1 & 1 & 1 & \hdots & 1 \end{bmatrix}$, they obtain the vector $\begin{bmatrix}
-(2n+2) & -(2n+2) & -(2n+2) & \hdots & -(2n+2) \end{bmatrix}$.  
This shows that $-(2n+2)$ is an eigenvalue of $H_n$ with multiplicity $1$.  

Therefore, the eigenvalues of the complex hyperbolic metric $\c_n$ are
	\begin{itemize}
	\item  0 with multiplicity $n^2 - n$
	\item  $-2$ with multiplicity $(n^2 - n) + (n-1) = n^2 - 1$
	\item  $-(2n+2)$ with multiplicity $1$.  
	\end{itemize}

\end{proof}

\vskip 20pt

\section{Overview of the metric and proof of the main theorem}\label{section4: proof of main theorem}

Following \cite{ST}, let $\Gamma' < \text{PU}(n,1)$ be a cocompact congruence arithmetic lattice of simple type, and let $M' = \Gamma' \setminus \C \H^n$.
Then, by \cite{ST}, \cite{DKV}, and the residual finiteness of $\Gamma$ (see the Introduction to \cite{Min CH}), for any $R > 0$ there exists $\Gamma < \Gamma'$ and an integer $d > 2$ such that
	\begin{itemize}
	\item  the manifold $M = \Gamma \setminus \C \H^n$ contains an embedded (possibly disconnected) complex codimension 1 totally geodesic submanifold $N$.
	\item  the $d$-fold ramified branched covering $X$ of $M$ about $N$ is a smooth manifold.  Let $\pi: X \to M$ denote this branched covering.  
	\item  the normal injectivity radius of the ramification locus  $\pi^{-1}(N)$ within $X$ is at least $R$.  
	\end{itemize}
This manifold $X$ is the manifold that is referred to in Theorem \ref{thm:main theorem}.  
It is not homotopy equivalent to a quotient of $\C \H^n$ by \cite{ST}, and it is K\"{a}hler by \cite{Zheng}.  

Let $\tilde{X}$ denote the universal cover of $X$.
The preimage of $N$, denoted $\tilde{N}$, within $\tilde{X}$ is a disconnected collection of hyperplanes.
Each hyperplane is isometric to $\C \H^{n-1}$ with respect to the pullback metric on $\tilde{X}$ which we denote $\c_n^*$.
To define the metric $g$ on $X$ we perform ``geometric surgery" on $\c_n^*$ within the $R$-tube about each component of $\tilde{N}$.  
As long as we arive back at the metric $\c_n^*$ by the end of the $R$-tube, our new metric $g$ will descend to a Riemannian metric on $X$.  

An outline of the construction of the metric $g$ from \cite{Min CH} is as follows.  
The $R$-tube about each component of $\tilde{N}$ is diffeomorphic to $E \times [0,R)$ where $E = \R^{2n-2} \times \S^1$.  
We define
	\begin{itemize}
	\item  $g = \c_n$ on $E \times [0,r_1)$ for some $r_1 >> 0$. \\
	 
	\item  over $E \times [r_1, r_2]$ for $r_2 >> r_1$ we slowly ``unwind" the horizontal distribution spanned by the holomorphic frame $\{ X_1 , \hdots , X_{2n-2} \}$ until it is tangent to the $r$-tube about $\R^{2n-2}$.  
	This amounts to very slowly decreasing the structure constants from equation \eqref{eqn:structure constants 2} from $2$ to $0$ for each $i$.  
	Since the beginning structure constant $c_i = 2$ is independent of the point $p \in \R^{2n-2}$ and angle $\theta \in \S^1$, this can be done in an invariant manner over $E$.  
	There are no holonomy issues since this procedure is independent of $\theta$.  
	The curvature equations in Theorem \ref{thm:new curvature equations} are valid provided $\ds{ \left[ X_i , \partial / \partial r \right] = 0 }$ for all $i$.  
	This will fail to be true as the vector fields turn with the horizontal distribution.  
	In general, $\ds{ \left[ X_i , \partial / \partial r \right] = d_i (\partial / \partial r) }$ for some $d_i \in \R$.  
	But, by choosing $r_2$ sufficiently large, we can keep $|d_i| < \delta$ for any prescribed $\delta > 0$.  
	As one chooses smaller values of $\d$, the curvature formulas in \ref{thm:new curvature equations} provide a better approximation of the sectional curvature tensor of $g$.  
	Note that on $E \times \{r_2 \}$ the metric $g$ equals the integrable complex hyperbolic metric $g_I$.  \\
	
	\item  on $E \times [r_2, r_3]$ for $r_3 > > r_2$ we slowly increase the angle about $\R^{2n-2}$ in a nearly identical way as to what was done by Gromov and Thurston in \cite{GT}.  
	By increasing the angle at a sufficiently slow rate, we can keep all sectional curvatures of $g$ pinched near $[-4,-1]$ (see Theorem 3.5 of \cite{Min CH}).
	Note that the metric $g$ on $E \times \{ r_3 \}$ will have total angle of $2 \pi d$ about $\R^{2n-2}$ and, when restricted to an arc of the form $\R^{2n-2} \times A_i \times \{ r_3 \}$ where $A_i$ has total angle $2 \pi$ with respect to $g$, the metric $g$ is again equal to $g_I$.  \\
	
	\item  Subdivide $\S^1$ into $d$ arcs $\{ A_1, \hdots , A_d \}$ of length $2\pi / d$ with respect to the standard metric on $\S^1$.  
	We then ``rewind" the metric $g$ over $\R^{2n-2} \times A_i \times [r_3, R]$ from $g_I$ to $\c_n$ in an exactly backwards manner as to what was described in the second bullet point above.  
	Since this process was invariant of $p \in \R^{2n-2}$ and $\theta \in \S^1$ it agrees at the intersection of any two arcs.  
	The resulting metric on $E \times \{ R \}$ is equal to the pullback metric $\c_n^*$.  
	\end{itemize}

With this description complete we can now prove Theorems \ref{thm:main theorem} and \ref{thm:main cor}.

\begin{proof}[Proof of Theorem \ref{thm:main theorem}]
Let $\cR$ denote the curvature operator of the metric $g$ described above.  
The fact that $\cR$ is nonpositive over $E \times [0,r_1]$ and for $r \geq R$ is given by Theorem \ref{thm:ch eigenvalues}.  
Theorem \ref{thm: structure constant operator} shows that $\cR$ is nonpositive on $E \times [r_1, r_2]$ and over $E \times [r_3, R]$.  

All that is left to check is the region $E \times [r_2, r_3]$.  
Over this region all structure constants in \eqref{eqn:structure constants 2} are zero and, via Theorem \ref{thm:new curvature equations}, all mixed terms of $g$ are approximately $0$ for $r_2 >  > 0$ (and with $h(r) = \cosh(r)$ and $v(r) = \sinh(2r)$).  
Thus the basis $(Y_i \wedge Y_j)$ with $i < j$ and $Y_i, Y_j$ ranging over all vectors in \eqref{eqn:ON basis} (approximately) diagonalizes $\cR$.  
Since this metric has negative sectional curvature for $r_3 - r_2$ chosen sufficiently large, all eigenvalues of $\cR$ are negative over this region.  
\end{proof}

\begin{proof}[Proof of Theorem \ref{thm:main cor}]
Consider the second bullet point above in the construction of the metric $g$.  
To construct the metric for Theorem \ref{thm:main cor} we instead first turn the horizontal distribution slightly away from the $r$-tube about $\R^{2n-2}$ creating structure constants $c_i = 2 + \d$ for a sufficiently small $\d$.  
We then slowly turn the horizontal distribution back toward the $r$-tube as in the construction above.  
A quick analysis of the blocks \eqref{eqn:block 1} and \eqref{eqn:block 2} show that both have a positive eigenvalue when $c_i > 2$ for all $i$.  
Thus, the curvature operator $\cR$ will not be nonpositive.  
However, for $\d > 0$ chosen sufficiently small, all sectional curvatures will still be negative (and, moreover, will be pinched near $[-4,-1]$).  
This follows from Lemma \ref{thm:pinched curvature} where the choice for the structure constants $c_i$ can be extended slightly beyond $[-2,2]$ by continuity.  
\end{proof}

\vskip 20pt

\end{document}